 \newtheorem{thm}{Theorem}[section]
 \newtheorem{cor}[thm]{Corollary}
 \newtheorem{prop}[thm]{Proposition}
 \theoremstyle{definition}
 \theoremstyle{remark}
\newcommand{\ie}{i.e.\ }
\newcommand{\f}{\phi}
\newcommand{\tg}{\tilde{g}}
\newcommand{\n}{\nabla}
\newcommand{\M}{(\mathcal{M},\allowbreak{}\f,\allowbreak{}\xi,\allowbreak{}\eta,\allowbreak{}g)}
\newcommand{\F}{\mathcal{F}}
\newcommand{\I}{\mathcal{I}}
\newcommand{\MM}{\mathcal{M}}
\newcommand{\LL}{\mathfrak{L}}
\newcommand{\LLL}{\mathcal{L}}
\newcommand{\HH}{\mathcal{H}}
\newcommand{\R}{\mathbb R}
\newcommand{\X}{\mathfrak X}
\newcommand{\tr}{{\rm tr}}
\newcommand{\ta}{\theta}
\newcommand{\om}{\omega}
\newcommand{\lm}{\lambda}
\newcommand{\al}{\alpha}
\newcommand{\bt}{\beta}
\newcommand{\dsfrac}{\displaystyle\frac}
\newcommand{\ad}{\operatorname{Ad}}
\newcommand{\thmref}[1]{Theorem~\ref{#1}}
\newcommand{\corref}[1]{Corollary~\ref{#1}}
\newcommand{\propref}[1]{Proposition~\ref{#1}}
\begin{document}

\title[Lie Groups as 3-Dimensional Almost Paracontact \ldots]
{Lie Groups as 3-Dimensional Almost Paracontact Almost
Paracomplex Riemannian  
Manifolds}

\author[M. Manev]{Mancho Manev}
\address[M. Manev]{University of Plovdiv Paisii Hilendarski,   Faculty of Mathematics and
Informatics,   Department of Algebra and Geometry,   24 Tzar Asen St.,
4000 Plovdiv,  Bulgaria;}
\address[M. Manev]{Medical University of Plovdiv, Faculty of Public
Health, Department of Medical Informatics, Biostatistics and E-Learning, 15-A Vasil Aprilov
Blvd., 4002  Plovdiv,   Bulgaria}
\email{mmanev@uni-plovdiv.bg}

\author[V. Tavkova]{Veselina Tavkova}
\address[V. Tavkova]{University of Plovdiv Paisii
Hilendarski,
Faculty of Mathematics and Informatics,   Department of Algebra
and Geometry,   24 Tzar Asen St.,
4000 Plovdiv, Bulgaria} \email{vtavkova@uni-plovdiv.bg}


\subjclass{53C15, 53C25}

\keywords{Almost paracontact structure, almost paracomplex structure, Riemannian metric, Lie group, Lie algebra, curvature properties}


\begin{abstract}
Almost paracontact almost paracomplex Riemannian manifolds of the lowest dimension 3
are considered.
Such structures are constructed on a family of Lie groups and the obtained manifolds are studied.
Curvature properties of these manifolds are investigated. An example is commented as support of obtained results.
\end{abstract}

\maketitle

\section*{Introduction}

The object of our considerations is geometry of the so-called almost paracontact almost paracomplex Riemannian manifolds. The restriction of the introduced almost paracontact structure on the paracontact distribution is an almost paracomplex structure. The more popular case is when the compatible metric with the almost paracontact structure is  Riemannian, although the metric can be also indefinite.

Most generally, the notion of almost paracontact structure on a differentiable manifold of arbitrary dimension was introduced by I. Sato \cite{Sato76}.
The restriction of this structure on the paracontact distribution is an almost product structure classified by A.M. Naveira \cite{Nav}.

The almost paracontact structure is an analogue of almost contact structure although almost contact manifolds are necessarily odd-dimensional whereas almost paracontact manifolds could be even-dimensional as well.

More close analogue of an almost complex structure for the considered manifolds is the case when the induced almost product structure is traceless. Then such a structure is called almost paracomplex structure.
These manifolds are called almost paracontact almost paracomplex manifolds \cite{ManVes18}.
They have dimension $(2n+1)$ and are classified under the name of almost paracontact
Riemannian manifolds of type $(n, n)$ by M. Manev and M. Staikova in
\cite{ManSta01}.

A number of authors have studied Lie groups as manifolds equipped with various tensor structures and metrics that are compatible with the structures (including in the lowest-dimensional cases) --  for example, \cite{Blair} and \cite{BBV} for almost contact metric manifolds, \cite{HManMek} and \cite{MIHM} for almost contact B-metric manifolds, \cite{AbbGarb} and \cite{FCG} for almost complex manifolds with Hermitian metric,  \cite{GriManMek} and \cite{MT} for almost complex manifolds with Norden metric, \cite{Barb} and \cite{DF} for hypercomplex hyper-Hermitian manifolds, \cite{GriMan} and \cite{Man44} for almost hypercomplex Hermitian-Norden manifolds, \cite{DobrMek} and \cite{Sht} for Riemannian almost product manifolds, \cite{VMMolina} and \cite{ZamNak} for almost paracontact metric manifolds.

The goal of
the present work is to study the geometric characteristics and properties of
a family of Lie groups considered as 3-dimensional almost paracontact almost paracomplex Riemannian manifolds.
The expected results will provide  a series of explicit examples of the manifolds studied and will contribute to understanding of their geometry.

The  paper is organized as follows. In
Sect.~\ref{sect-prel} we give some preliminary facts and definitions for the studied manifolds.
In Sect.~\ref{sect-Lie-mfds} we construct and characterize a family of 3-dimensional Lie groups considered  as almost paracontact almost paracomplex Riemannian manifolds.
In Sect.~\ref{sect-Exms} we give an example in relation with
the  above investigations.

\section{Preliminaries}\label{sect-prel}

\subsection{Structures of almost paracontact almost paracomplex Riemannian manifolds}\label{sect-mfds}

Let $(\MM,\f,\xi,\eta)$ be an almost paracontact almost paracomplex manifold, \ie $\MM$
is a $(2n+1)$-dimensional differentiable manifold with an almost
paracontact structure consisting of a tensor field $\f$ of type  $(1,1)$ on the tangent bundle $T\MM$ of $\MM$, a vector field $\xi$ and an 1-form
 $\eta$, satisfying the following conditions:
\begin{equation}\label{str}
\begin{array}{c}
\f^2 = \I - \eta \otimes \xi,\quad \eta(\xi)=1,\quad
\eta\circ\f=0,\quad \f\xi = 0,\quad \tr\f = 0,
\end{array}
\end{equation}
where $\I$ is the identity on $T\MM$ \cite{Sato76}.

Moreover, $(\MM,\f,\xi,\eta)$ admits a Riemannian metric $g$ which is compatible with the structure of the manifold by the following way:
\begin{equation}\label{metric}
g(\f x, \f y) = g(x,y) - \eta(x)\eta(y),\qquad g(x,\xi) = \eta(x).
\end{equation}
Then $(\MM,\f,\xi,\eta,g)$ is called an \emph{almost paracontact almost paracomplex Riemannian manifold} \cite{ManVes18}.

Here and further $x$, $y$, $z$, $w$ will stand for arbitrary
elements of the Lie algebra $\X(\MM)$ of tangent vector fields on $\MM$ or vectors in the tangent space $T_p\MM$ at $p\in \MM$.

Let us recall that the endomorphism $\f$ induces an almost paracomplex structure on each fibre of the $2n$-dimensional paracontact distribution $\HH=\ker(\eta)$ of $T\MM$.
Furthermore, an almost paracomplex structure is an almost product structure  $P$ (\ie $P^2=\I$ and  $P\neq \pm \I$) such that the eigenvalues $+1$ and $-1$ of $P$ have one and the same multiplicity $n$, \ie $\tr P=0$ follows.

The associated metric $\tg$ of $g$ on $(\MM, \f, \xi,\eta, g)$ is defined by the equality \(\tg(x,y)=g(x,\f y)+\eta(x)\eta(y)\). Obviously, it is a compatible metric for $(\MM, \f, \allowbreak{}\xi,\allowbreak{}\eta)$, \ie relations \eqref{metric} are valid for $\tg$ and $(\f, \xi,\eta)$, as well as it is a pseudo-Riemannian metric of signature  $(n+1,n)$. Therefore,  $(\MM, \f, \xi,\eta,\tg)$ is an almost paracontact almost paracomplex pseudo-Riemannian manifold.

According to \cite{ManVes18}, the decomposition $x=\f^2x+\eta(x)\xi$ due to  (\ref{str}) generates the projectors
$h$ and $v$ on any tangent space $T_p\MM$ of $(\MM,\f,\xi,\eta,g)$,
determined by $hx=\f^2x$ and $vx=\eta(x)\xi$. Then, it is obtained the
orthogonal decomposition $T_p\MM=h(T_p\MM)\oplus v(T_p\MM)$. Moreover, it generates
the corresponding orthogonal decomposition of the space $\mathcal{S}$ of the
$(0,2)$-tensors $S$ over $(\MM,\f,\xi,\eta,g)$ as follows:
\[
\mathcal{S}=\ell_1(\mathcal{S})\oplus \ell_2(\mathcal{S})
\oplus \ell_3(\mathcal{S}),
\qquad
\ell_i(\mathcal{S})=\left\{S\in \mathcal{S}\ |\
S=\ell_i(S)\right\},\quad i=1,2,3;
\]
\begin{equation*}\label{ell}
\begin{array}{l}
\ell_1(S)(x,y)=S(hx,hy),\qquad \ell_2(S)(x,y)=S(vx, vy),\\[4pt]
\ell_3(S)(x,y)=S(vx, hy)+S(hx, vy).
\end{array}
\end{equation*}

Thus, for $g, \tg\in \mathcal{S}$, we have:
\begin{equation}\label{lg}
	\begin{array}{lll}
	\ell_1(g)=g(\f\cdot,\f\cdot)=g-\eta\otimes\eta, \quad
	&\ell_2(g)=\eta\otimes\eta,
	\quad &\ell_3(g)=0,\\[0pt]
	\ell_1(\tg)=g(\cdot,\f\cdot)=\tg-\eta\otimes\eta, \quad
	&\ell_2(\tg)=\eta\otimes\eta, \quad &\ell_3(\tg)=0.
	\end{array}
\end{equation}

\subsection{Curvatures of the considered manifolds}\label{sec-curv}

The curvature tensor $R$ of type $(0,3)$ for the Levi-Civita connection $\nabla$ of $g$ is determined as usually by $R=\left[\n,\n\right]-\n_{[\ ,\ ]}$.
The corresponding $(0,4)$-tensor is denoted by the same letter and it is defined by  $R(x,y,z,w)=g(R(x,y)z,w)$.  With respect to an arbitrary basis, the Ricci tensor $\rho$ and the scalar curvature $\tau$ for $R$ as well as
their associated quantities are determined by:
\begin{equation*}
\begin{array}{ll}
    \rho(y,z)=g^{ij}R(e_i,y,z,e_j),\qquad &
    \tau=g^{ij}\rho(e_i,e_j),\\[4pt]
    \rho^*(y,z)=g^{ij}R(e_i,y,z,\f e_j),\qquad &
    \tau^*=g^{ij}\rho^*(e_i,e_j).
\end{array}
\end{equation*}


Further, we use the Kulkarni-Nomizu
product $g\owedge  h$ of two (0,2)-tensors $g$ and $h$ defined by
\[
\begin{array}{l}
\left(g\owedge h\right)(x,y,z,w)=g(x,z)h(y,w)-g(y,z)h(x,w)\\
\phantom{\left(g\owedge h\right)(x,y,z,w)}
+g(y,w)h(x,z)-g(x,w)h(y,z).
\end{array}
\]
Moreover,   $g\owedge h$ has the basic properties of $R$ 
if and only if $g$ and $h$ are symmetric.

Let $\al$ be a non-degenerate 2-plane in $T_p\MM$, $p \in \MM$, having a basis  $\{x,y\}$.
The sectional curvature $k(\al;p)$ with respect to $g$ and $R$ is determined by
\begin{equation*}\label{sect}
k(\al;p)=-\frac{2R(x,y,y,x)}{(g\owedge g)(x,y,y,x)}.
\end{equation*}

It is known that a 2-plane is called a \emph{$\f$-holomorphic section}
(respectively, a \emph{$\xi$-sec\-tion})
if $\al= \f\al$ (respectively, $\xi \in \al$).

Let us recall that on each 3-dimensional manifold the curvature tensor has the following form:
\begin{equation}\label{R3}
R 
=-g\owedge \left(\rho-\frac{\tau}{4} g\right).
\end{equation}

%
%
%

As it is known,  a manifold is called \emph{Einstein} if the
Ricci tensor is proportional to the metric tensor, \ie $\rho=\lm
g$, $\lm\in\R$.

For the manifolds studied $\M$, besides the metric $g$, we also have its associated metric $\tg$ and their component $\eta\otimes\eta$ according to \eqref{lg}. Then, it is reasonable to consider the following more general case of the Einstein property, similarly to \cite{HManMek} for almost contact B-metric manifolds.
An almost paracontact almost paracomplex Riemannian manifold is called \emph{$\eta$-paracomplex-Einstein} when the following condition is valid
\begin{equation*}
\rho=\lm g + \mu \tg + \nu \eta\otimes\eta, \qquad \lm,\mu,\nu\in\R.
\end{equation*}
In particular, if $\mu = 0$ then $\MM$ is called \emph{para-$\eta$-Einstein}.

For almost paracontact metric manifolds, there is no an associated metric $\tg$ and thus the para-$\eta$-Einstein kind is only applicable. In this regard, several authors consider
Sasakian and paracontact metric manifolds satisfying the para-$\eta$-Einstein condition and corresponding properties  are well studied, e.g.
\cite{Ok62}, \cite{SiSha83}, \cite{Parch16}.

In the present paper, we consider also the Einstein condition for the manifolds $\M$ regarding the separate components of the metrics, 
according to \eqref{lg}.

A para-$\eta$-Einstein
manifold is said to be  an \emph{$\ell_i$-para-$\eta$-Einstein
manifold} ($i\in\{1,2\}$) when the condition $\rho=\lm\, \ell_i(g)$, $\lm\in\R$, is satisfied.
Similarly, there are meaningful respective notions regarding $\tg$.

\subsection{Basic classes of the considered manifolds}\label{sec-class}

In \cite{ManSta01}, it is given a classification of almost paracontact almost paracomplex Riemannian manifolds consisting of eleven basic classes  $\F_1$, $\F_2$, $\dots$, $\F_{11}$. It is made with respect to the tensor $F$ of type  (0,3) defined by
\begin{equation*}\label{F=nfi}
F(x,y,z)=g\bigl( \left( \nabla_x \f \right)y,z\bigr).
\end{equation*}
The basic properties of $F$ with respect to the structure are the following:
\begin{equation*}\label{F-prop}
F(x,y,z)=F(x,z,y)=-F(x,\f y,\f z)+\eta(y)F(x,\xi,z)
+\eta(z)F(x,y,\xi).
\end{equation*}

Let $\left\{\xi; e_i\right\}$ $(i=1,2,\dots,2n)$ is a basis of the tangent space
$T_p\MM$ at an arbitrary point $p\in \MM$. The components of the inverse matrix of $g$ are denoted by $g^{ij}$, then the Lee 1-forms $\theta$, $\theta^*$, $\omega$ associated with $F$ are defined by:
\begin{equation*}\label{t}
\theta(z)=g^{ij}F(e_i,e_j,z),\quad \theta^*(z)=g^{ij}F(e_i,\f
e_j,z), \quad \omega(z)=F(\xi,\xi,z).
\end{equation*}

The intersection of the basic classes is the special class $\F_0$
determined by the condition $F(x,y,z)=0$ and it is known as the class with $\n$-parallel
structures, \ie $\n\f=\n\xi=\n\eta=\n g=\n \tg=0$.

In \cite{ManSta01}, there are given the conditions for $F$ determining the basic classes $\F_{i}$ of $\M$, whereas the components $F_{i}$ of $F$ corresponding to $\F_{i}$ are known from \cite{ManVes18}.
Namely, the manifold $\M$ belongs to $\F_{i}$ $(i\in\{1,2,\dots,11\})$
if and only if the equality $F=F_i$ is valid. In the latter case, $\M$ is also called an $\F_{i}$-manifold.

Moreover, a studied manifold
belongs to a direct sum of two or more basic classes, \ie
$\M\in\F_i\oplus\F_j\oplus\cdots$, if and only if $F$ is the sum of the corresponding components
$F_i$, $F_j$, $\ldots$, \ie the following condition is
satisfied $F=F_i+F_j+\cdots$.

In the present paper, we consider the case of the lowest dimension of the manifolds under study, \ie $\dim{\MM}=3$.

Then, the basic  classes of the 3-dimensional manifolds of the considered type are
$\F_1$, $ \F_4$, $\F_5$,  $\F_8$, $\F_9$, $\F_{10}$,  $\F_{11}$, \ie $\F_2$, $\F_3$, $\F_6$, $\F_7$ are restricted to $\F_0$ \cite{ManVes18}.

Let $\left\{e_0=\xi,e_1=e,e_2=\f e\right\}$ be a \emph{$\f$-basis} of $T_p\MM$, therefore it is an orthonormal basis  with respect to $g$, \ie $g(e_i,e_j)=\delta_{ij}$ for all $i,j\in\{0,1,2\}$.
We denote the components of $F$, $\ta$, $\ta^*$ and $\om$  with respect to this $\f$-basis 
as follows
\[
{F_{ijk}=F(e_i,e_j,e_k)},\quad {\ta_k=\ta(e_k)},\quad {\ta^*_k=\ta^*(e_k)},\quad {\om_k=\om(e_k)}.
\]

In the final part of the present section we recall the needed results from \cite{ManVes18}.

The components of the Lee forms with respect to the $\f$-basis are:
\begin{equation}\label{t3}
\begin{array}{c}
	\begin{array}{ll}
		\ta_0=F_{110}+F_{220},\quad & \ta_1=F_{111}=-F_{122}=-\ta^*_2,\\[0pt]
		\ta^*_0=F_{120}+F_{210}, \quad &\ta_2=F_{222}=-F_{211}=-\ta^*_1,\\[0pt]
	\end{array}\\
	\begin{array}{lll}
		\om_0=0,  \qquad & \om_1=F_{001},\qquad & \om_2=F_{002}.
	\end{array}
\end{array}
\end{equation}

Further, if $F_s$ $(s=1,2,\dots,11)$ are the components of  $F$ in the corresponding basic classes $\F_s$, we have:
\begin{equation}\label{Fi3}
\begin{array}{l}
F_{1}(x,y,z)=\left(x^1\ta_1-x^2\ta_2\right)\left(y^1z^1-y^2z^2\right); \\[0pt]
F_{2}(x,y,z)=F_{3}(x,y,z)=0;
\\
F_{4}(x,y,z)=\frac{\ta_0}{2}\Bigl\{x^1\left(y^0z^1+y^1z^0\right)
+x^2\left(y^0z^2+y^2z^0\right)\bigr\};\\[0pt]
F_{5}(x,y,z)=\frac{\ta^*_0}{2}\bigl\{x^1\left(y^0z^2+y^2z^0\right)
+x^2\left(y^0z^1+y^1z^0\right)\bigr\};\\[0pt]
F_{6}(x,y,z)=F_{7}(x,y,z)=0;\\[0pt]
F_{8}(x,y,z)=\lm\bigl\{x^1\left(y^0z^1+y^1z^0\right)
-x^2\left(y^0z^2+y^2z^0\right)\bigr\},\\[0pt]
\hspace{38pt} \lm=F_{110}=-F_{220}
;\\[0pt]
F_{9}(x,y,z)=\mu\bigl\{x^1\left(y^0z^2+y^2z^0\right)
-x^2\left(y^0z^1+y^1z^0\right)\bigr\},\\[0pt]
\hspace{38pt} \mu=F_{120}=-F_{210}
;\\[0pt]
F_{10}(x,y,z)=\nu x^0\left(y^1z^1-y^2z^2\right),\quad
\nu=F_{011}=-F_{022}
;\\[0pt]
F_{11}(x,y,z)=x^0\bigl\{\om_{1}\left(y^0z^1+y^1z^0\right)
+\om_{2}\left(y^0z^2+y^2z^0\right)\bigr\},
\end{array}
\end{equation}
where $x=x^ie_i$, $y=y^ie_i$, $z=z^ie_i$ are arbitrary vectors in $T_p\MM$, $p\in \MM$.

\section{Lie groups as 3-dimensional manifolds of the studied type}\label{sect-Lie-mfds}

Let $\LLL$ be a 3-dimensional real connected Lie group and
$\mathfrak{l}$ be its Lie algebra. If
$\{E_{0},E_{1},E_{2}\}$ is a basis of left invariant vector fields on $\mathfrak{l}$ then an almost paracontact almost paracomplex structure $(\f,\xi,\eta)$ and a Riemannian metric $g$ can be determined by the following way:
\begin{equation}\label{strL}
\begin{array}{l}
\f E_0=0,\quad \f E_1=E_{2},\quad \f E_{2}= E_1,\quad \xi=
E_0,\quad \\[4pt]
\eta(E_0)=1,\quad \eta(E_1)=\eta(E_{2})=0,
\end{array}
\end{equation}
\begin{equation}\label{gL}
  g(E_i,E_j)=\delta_{ij},\qquad i,j\in\{0,1,2\}.
\end{equation}
Thus, we obtain the manifold $(\LLL,\f,\xi,\eta,g)$. Obviously, we have the following
\begin{prop}
The manifold $(\LLL,\f,\xi,\eta,g)$ is a 3-dimensional almost paracontact almost paracomplex Riemannian manifold.
\end{prop}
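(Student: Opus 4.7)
The plan is a direct verification of the defining conditions \eqref{str} and \eqref{metric} on the basis $\{E_0,E_1,E_2\}$, with extension by linearity. Since $\dim\LLL=3=2n+1$ with $n=1$, once compatibility is checked the dimension condition is automatic.

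First I would check the almost paracontact structure conditions in \eqref{str}. From \eqref{strL}, the equalities $\f\xi=\f E_0=0$ and $\eta(\xi)=\eta(E_0)=1$ are immediate. The relation $\eta\circ\f=0$ follows by evaluating on each basis vector: $\eta(\f E_0)=\eta(0)=0$, $\eta(\f E_1)=\eta(E_2)=0$, $\eta(\f E_2)=\eta(E_1)=0$. For $\f^2=\I-\eta\otimes\xi$, I compute $\f^2 E_0=0=E_0-\eta(E_0)\xi$, and for $i\in\{1,2\}$, $\f^2 E_i=E_i=E_i-\eta(E_i)\xi$ since $\eta(E_i)=0$. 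The trace condition $\tr\f=0$ holds because in the basis $\{E_0,E_1,E_2\}$ the matrix of $\f$ has entries only in the off-diagonal $(1,2)$ and $(2,1)$ positions.

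Next I would verify the compatibility of $g$ with the structure, i.e.\ \eqref{metric}. The equality $g(x,\xi)=\eta(x)$ reduces on the basis to $g(E_i,E_0)=\delta_{i0}=\eta(E_i)$ by \eqref{gL} and \eqref{strL}. For the main compatibility $g(\f x,\f y)=g(x,y)-\eta(x)\eta(y)$, I check on basis pairs: if either argument equals $E_0$, both sides vanish (the left because $\f E_0=0$, the right because $g(E_0,E_j)=\delta_{0j}=\eta(E_j)$); for $i,j\in\{1,2\}$, the swap $\f E_1\leftrightarrow E_2$ gives $g(\f E_i,\f E_j)=\delta_{ij}=g(E_i,E_j)-\eta(E_i)\eta(E_j)$.

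There is no real obstacle: the only thing to be careful about is tracking the index swap under $\f|_\HH$. Combined with the fact that $g$ is Riemannian by \eqref{gL} and that the induced endomorphism $\f|_\HH$ on $\HH=\ker\eta=\Span\{E_1,E_2\}$ satisfies $\f|_\HH^2=\Id$ with $\tr\f|_\HH=0$ (hence it is an almost paracomplex structure of type $(1,1)$ on $\HH$), the conclusion that $(\LLL,\f,\xi,\eta,g)$ is a 3-dimensional almost paracontact almost paracomplex Riemannian manifold follows.
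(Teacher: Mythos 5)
Your verification is correct and is precisely the routine basis-by-basis check of \eqref{str} and \eqref{metric} that the paper leaves implicit (it states the proposition with ``Obviously'' and gives no proof). Nothing is missing; the computation of $\f^2$, the trace, and the compatibility of $g$ all check out.
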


Further, the denotation $(\LLL,\f,\xi,\eta,g)$ stands for this manifold.

The corresponding Lie algebra $\mathfrak{l}$
 is defined by:
\begin{equation}\label{lie}
\left[E_{i},E_{j}\right]=C_{ij}^k E_{k}, \quad i, j, k \in \{0,1,2\}.
\end{equation}

From the nine commutation coefficients $C_{ij}^k$, using the Jacobi identity
\[
\bigl[[E_i,E_j],E_k\bigr]+\bigl[[E_j,E_k],E_i\bigr]+\bigl[[E_k,E_i],E_j\bigr]=0,
\]
remain six which
could be chosen as parameters. So, we express the three coefficients with
different indices by the six parameters (if the
denominators are non-zero) as follows:
\begin{equation*}\label{Jac}
\begin{array}{c}
C_{12}^0=\dsfrac{ C_{01}^0 C_{12}^1+ C_{02}^0 C_{12}^2}{C_{01}^1 +
C_{02}^2},\qquad C_{02}^1=\dsfrac{ C_{02}^0 C_{01}^1-C_{12}^1
C_{02}^2}{C_{01}^0-C_{12}^2},
\\[4pt]
C_{01}^2=\dsfrac{C_{01}^0 C_{02}^2+C_{01}^1 C_{12}^2}{C_{02}^0 +
C_{12}^1}.
\end{array}
\end{equation*}

Using the known property of the Levi-Civita connection of $g$
\begin{equation}\label{Kosz}
2g\left(\n_{E_i}E_j,E_k\right)
=g\left([E_i,E_j],E_k\right)+g\left([E_k,E_i],E_j\right)
+g\left([E_k,E_j],E_i\right),
\end{equation}
we get the following formula for the $F$'s components
$F_{ijk}=F(E_i,E_j,E_k)$, $i,j,\allowbreak{}k\in\{0,1,2\}$:
\begin{equation*}\label{Fijk}
\begin{split}
2F_{ijk}=g\left([E_i,\f E_j]-\f [E_i,E_j],E_k\right)
&+g\left(\f [E_k,E_i]-[\f E_k,E_i],E_j\right)\\[4pt]
&+g\left([E_k,\f E_j]-[\f E_k,E_j],E_i\right).
\end{split}
\end{equation*}

Then, we have the following equations:
\begin{equation}\label{FijkC}
\begin{array}{l}
\begin{array}{ll}
F_{111}=-F_{122}=2C_{12}^1,\quad &
F_{211}=-F_{222}=2C_{12}^2,\\[4pt]
F_{120}=F_{102}=C_{01}^1,\quad &
F_{020}=F_{002}=C_{01}^0,\\[4pt]
F_{210}=F_{201}=C_{02}^2,\quad &
F_{010}=F_{001}=C_{02}^0,\\[4pt]
\end{array}\ \\[4pt]
\begin{array}{l}
F_{110}=F_{101}=\frac12 \left(C_{12}^0+C_{02}^1+C_{01}^2\right),\\[4pt]
F_{220}=F_{202}=\frac12 \left(-C_{12}^0+C_{02}^1+C_{01}^2\right),\\[4pt]
F_{011}=-F_{022}=C_{12}^0+C_{02}^1-C_{01}^2,
\end{array}
\end{array}
\end{equation}
and the other components $F_{ijk}$ are zero.

Hence, we obtain the following equalities for the Lee forms:
\begin{equation}\label{titiC}
\begin{array}{l}
\begin{array}{l}
\ta_{0}=C_{02}^1+C_{01}^2,\\[4pt]
\ta_{1}=2C_{12}^1,\\[4pt]
\ta_{2}=-2C_{12}^2,\\[4pt]
\end{array}\quad
\begin{array}{l}
\ta^*_{0}=C_{01}^1+C_{02}^2,\\[4pt]
\ta^*_{1}=2C_{12}^2,\\[4pt]
\ta^*_{2}=-2C_{12}^1,\\[4pt]
\end{array}\quad
\begin{array}{l}
\om_{0}=0,\\[4pt]
\om_{1}=C_{02}^0,\\[4pt]
\om_{2}=C_{01}^0.
\end{array}
\end{array}
\end{equation}

\begin{thm}\label{thm-Fi-L}
The manifold $(\LLL,\f,\xi,\eta,g)$ belongs to the basic class $\F_s$
($s \in \{1,\allowbreak{}4,5,8,9,10,11\}$) if and only
if the corresponding Lie algebra $\mathfrak{l}$ is determined by
the following commutators:
\begin{equation*}\label{Fi-L}
\begin{array}{llll}
\F_1:\; &[E_0,E_1]=0, \; & [E_0,E_2]=0, \; &   [E_1,E_2]=\al E_1-\bt E_2;
\\[4pt]
\F_4:\; &[E_0,E_1]=\al E_2, \; &   [E_0,E_2]=\al E_1, \; &
[E_1,E_2]=0;
\\[4pt]
\F_5:\; &[E_0,E_1]=\al E_1, \; &   [E_0,E_2]=\al E_2, \; &
[E_1,E_2]=0;
\\[4pt]
\F_8:\; &[E_0,E_1]=\al E_2, \; &   [E_0,E_2]=-\al E_1, \; &
[E_1,E_2]=2\al E_0;
\\[4pt]
\F_9:\; &[E_0,E_1]=\al E_1, \; &   [E_0,E_2]=-\al E_2, \; &
[E_1,E_2]=0;
\\[4pt]
\F_{10}:\; &[E_0,E_1]=-\al E_2, \; &   [E_0,E_2]=\al E_1, \; &
[E_1,E_2]=0;
\\[4pt]
\F_{11}:\; &[E_0,E_1]=\al E_0, \; &   [E_0,E_2]=\bt E_0, \; &
[E_1,E_2]=0,
\end{array}
\end{equation*}
where $\al$, $\bt$ are arbitrary real parameters.
Moreover, the
relations of $\al$ and $\bt$ with the non-zero components
$F_{ijk}$ in the different basic classes $\F_s$ from \eqref{Fi3} are as follows:
\begin{equation*}\label{Fi-L-alpha}
\begin{array}{ll}
\F_1:\quad \al=\frac12 \ta_1,\quad \bt=-\frac12 \ta_2; \qquad
&\F_4:\quad \al=\frac12 \ta_0;\\[4pt]
\F_5:\quad \al=\frac12 \ta^*_0; \qquad
&\F_8:\quad \al=\lm; \\[4pt]
\F_9:\quad \al=\mu; \qquad
&\F_{10}:\quad \al=\frac12 \nu;   \\[4pt]
\F_{11}:\quad \al=\om_2, \quad \bt=\om_1.\qquad &
\end{array}
\end{equation*}
\end{thm}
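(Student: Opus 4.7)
The plan is to translate the class-membership condition $F = F_s$ (explicitly given by \eqref{Fi3}) into a system of linear conditions on the nine structure constants $C_{ij}^k$ via the dictionary \eqref{FijkC}, then read off the resulting commutators and finally identify the free parameters $\al, \bt$ using \eqref{titiC}. Since the map $(C_{ij}^k) \mapsto F$ defined by \eqref{FijkC} is linear, solving this system produces exactly the preimage of the admissible $F$-tensors in $\F_s$, which proves both directions of the stated equivalence simultaneously. For each class $\F_s$ the procedure is: (i) list from \eqref{Fi3} which components $F_{ijk}$ may be nonzero; (ii) set every remaining $F_{ijk}$ equal to zero and, via \eqref{FijkC}, solve the resulting homogeneous system for the $C_{ij}^k$; (iii) insert the surviving commutators into \eqref{lie}; (iv) verify compatibility with the Jacobi identity; (v) apply \eqref{titiC} to identify $\al$ and $\bt$.

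As a model case, for $\F_1$ only the components $F_{111}, F_{122}, F_{211}, F_{222}$ are allowed to be nonzero according to \eqref{Fi3}. Demanding $F_{020} = F_{010} = F_{120} = F_{210} = 0$ immediately yields $C_{01}^0 = C_{02}^0 = C_{01}^1 = C_{02}^2 = 0$ from \eqref{FijkC}, while $F_{110} = F_{220} = F_{011} = 0$ form a non-degenerate $3 \times 3$ linear system in $(C_{12}^0, C_{02}^1, C_{01}^2)$ whose unique solution is zero. Only $C_{12}^1$ and $C_{12}^2$ remain free, so $[E_0, E_1] = [E_0, E_2] = 0$ and $[E_1, E_2] = C_{12}^1 E_1 + C_{12}^2 E_2$; setting $\al = C_{12}^1$, $-\bt = C_{12}^2$ and invoking \eqref{titiC} produces the stated relations. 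The other six classes are handled by the same template: in each single-parameter class ($\F_4, \F_5, \F_8, \F_9, \F_{10}$) the unique free structure constant carries the distinguished coefficient ($\ta_0, \ta_0^*, \lm, \mu$, or $\nu$) from \eqref{Fi3}, while in $\F_{11}$ the two free constants $C_{02}^0$ and $C_{01}^0$ correspond to $\om_1$ and $\om_2$ respectively.

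The Jacobi identity must be checked in each case as a consistency condition, but it is essentially automatic: for $\F_1, \F_4, \F_5, \F_9, \F_{10}, \F_{11}$ at least one of the three brackets vanishes, trivializing the sum; for $\F_8$ it reduces to a single identity on the triple $(E_0, E_1, E_2)$ that is satisfied by the chosen coefficients. The main obstacle is therefore not conceptual but a matter of disciplined bookkeeping: for each of the seven classes one must track which of the nine $C_{ij}^k$ are forced to vanish, which remain free, and then align signs carefully so that the parameters extracted from \eqref{titiC} reproduce exactly the claimed formulas for $\al$ and $\bt$.
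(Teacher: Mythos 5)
Your proposal is correct and follows essentially the same route as the paper, whose proof is a one-line appeal to direct computation with \eqref{t3}, \eqref{Fi3}, \eqref{FijkC} and \eqref{titiC} --- precisely the linear-system translation you spell out (your $\F_1$ model case, including the non-degenerate $3\times3$ system forcing $C_{12}^0=C_{02}^1=C_{01}^2=0$, checks out, and your explicit Jacobi verification is a welcome addition the paper omits). One minor caveat: with the commutator $[E_1,E_2]=\al E_1-\bt E_2$, your identification $-\bt=C_{12}^2$ combined with $\ta_2=-2C_{12}^2$ from \eqref{titiC} yields $\bt=\frac12\ta_2$ rather than the stated $\bt=-\frac12\ta_2$; this sign discrepancy is internal to the paper's own statement rather than a flaw in your argument.
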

\begin{proof}
The calculations are made, using \eqref{t3}, \eqref{Fi3}, \eqref{FijkC} and  \eqref{titiC}.
\end{proof}

Let us remark that the class of the para-Sasakian paracomplex Riemannian
manifolds is $\F'_4$, where $\F'_4$ is the subclass of $\F_4$ determined by the condition $\ta(\xi)=-2n$
\cite{ManVes18}.

Then, due to \thmref{thm-Fi-L}, we have the following
\begin{cor}
The manifold $(\LLL,\f,\xi,\eta,g)$ is para-Sasakian if and only
if the corresponding Lie algebra $\mathfrak{l}$ is determined by
the following commutators:
\[
[E_0,E_1]=-E_2, \quad   [E_0,E_2]=-E_1, \quad [E_1,E_2]=0.
\]
\end{cor}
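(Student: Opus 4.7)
The plan is to read off the result as a direct specialization of \thmref{thm-Fi-L}. Since the para-Sasakian paracomplex Riemannian class is $\F'_4 \subset \F_4$, I would first invoke the $\F_4$-case of \thmref{thm-Fi-L}, which yields the commutators
\[
[E_0,E_1]=\al E_2,\qquad [E_0,E_2]=\al E_1,\qquad [E_1,E_2]=0
\]
together with the identification $\al = \tfrac12 \ta_0$.

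Next I would impose the defining condition of $\F'_4$, namely $\ta(\xi) = -2n$. Because we are in dimension $3$ we have $n=1$, and with respect to the $\f$-basis $\ta(\xi) = \ta_0$. Hence the constraint is $\ta_0 = -2$, which via $\al = \tfrac12 \ta_0$ forces $\al = -1$. Substituting $\al = -1$ into the $\F_4$-commutators produces exactly
\[
[E_0,E_1]=-E_2,\qquad [E_0,E_2]=-E_1,\qquad [E_1,E_2]=0,
\]
which is the required forward direction.

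For the converse, I would start from these commutators, check via \eqref{titiC} that $\ta_0 = C_{02}^1 + C_{01}^2 = -1 + (-1) = -2$ and that all other structure-determining components vanish, so $(\LLL,\f,\xi,\eta,g)$ belongs to $\F_4$ with $\ta_0=-2$; by definition this places it in $\F'_4$, i.e.\ it is para-Sasakian. Both implications are essentially bookkeeping, so I do not anticipate any genuine obstacle; the only point requiring minor care is the sign conventions, specifically that the para-Sasakian normalization $\ta(\xi)=-2n$ (rather than $+2n$) is what pins down $\al = -1$ and thus the minus signs in the commutators.
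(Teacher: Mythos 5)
Your proposal is correct and follows exactly the paper's (implicit) argument: the corollary is obtained by specializing the $\F_4$ case of \thmref{thm-Fi-L} to the subclass $\F'_4$ via $\ta(\xi)=\ta_0=-2n=-2$, which forces $\al=\tfrac12\ta_0=-1$. The converse bookkeeping through \eqref{titiC} and \eqref{FijkC} is also consistent with the paper's computations.
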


Let us note that an $\F_{0}$-manifold is obtained if and only if the Lie
algebra is Abelian, \ie all commutators are zero. Further, we skip
this special case.

\subsection{Some special structures  on the considered manifolds}

A metric $g$ is called \emph{Killing} (or,
\emph{$\ad$-invariant}) if satisfying the property
\begin{equation}\label{g-Kil}
g([x,y],z)=g(x,[y,z]).
\end{equation}

\begin{thm}
The metric $g$ of $(\LLL,\f,\xi,\eta,g)$ is Killing if and only if $(\LLL,\allowbreak{}\f,\allowbreak{}\xi,\allowbreak{}\eta,\allowbreak{}g)$
belongs to the subclass of $\F_8\oplus\F_{10}$ determined by the
condition $2\lm=-\nu$.
\end{thm}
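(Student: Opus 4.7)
The plan is to translate the Killing condition \eqref{g-Kil} into linear relations among the commutation coefficients $C_{ij}^k$ and then read off the corresponding class from \eqref{FijkC}, \eqref{Fi3} and \thmref{thm-Fi-L}.

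First I would test \eqref{g-Kil} on triples from the orthonormal frame $\{E_0,E_1,E_2\}$. Because $g(E_a,E_b)=\delta_{ab}$ by \eqref{gL}, the condition collapses to $C_{ij}^k = C_{jk}^i$ for every $i,j,k\in\{0,1,2\}$. Combined with the skew-symmetry $C_{ij}^k = -C_{ji}^k$ of the bracket, this makes the array $C_{ijk}:=C_{ij}^k$ totally antisymmetric in all three indices. In dimension three a totally antisymmetric array is proportional to the Levi-Civita symbol, so $C_{ijk}=a\,\epsilon_{ijk}$ for a single real parameter $a$; equivalently, $C_{01}^2 = -C_{02}^1 = C_{12}^0 = a$ and every $C_{ij}^k$ with a repeated index vanishes.

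Next I would plug these values into \eqref{FijkC} to obtain the non-zero components $F_{110}=F_{101}=\tfrac{a}{2}=-F_{220}=-F_{202}$ and $F_{011}=-F_{022}=-a$. Matching with the templates in \eqref{Fi3} identifies $F = F_8+F_{10}$ with $\lm = \tfrac{a}{2}$ and $\nu = -a$, so the manifold lies in $\F_8\oplus\F_{10}$ and automatically satisfies $2\lm = -\nu$. For the reverse direction, assuming $F = F_8+F_{10}$ with $2\lm = -\nu$, I would invert \eqref{FijkC} on the three non-trivial equations to recover $C_{12}^0 = 2\lm$, $C_{02}^1 = \tfrac{\nu-2\lm}{2}$ and $C_{01}^2 = \tfrac{2\lm-\nu}{2}$, with all remaining constants zero; the imposed relation $2\lm = -\nu$ then forces $C_{12}^0 = C_{01}^2 = -C_{02}^1$, restoring the total antisymmetry and hence the Killing property.

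The only conceptual step is the first: recognizing that in a $g$-orthonormal left-invariant basis, ad-invariance of $g$ is equivalent to total antisymmetry of the structure constants. Everything else is a direct bookkeeping match with the tables already established in the paper, so no further technical obstacle is anticipated.
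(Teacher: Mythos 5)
Your proposal is correct and follows essentially the same route as the paper: translate the Killing condition into the linear relations $C_{12}^0=-C_{02}^1=C_{01}^2$ with all other structure constants zero, then match against \eqref{FijkC} and \eqref{Fi3} to identify $F=F_8+F_{10}$ with $2\lm=-\nu$, and reverse the bookkeeping for the converse. Your observation that ad-invariance of $g$ in an orthonormal frame is equivalent to total antisymmetry of $C_{ijk}$, hence proportionality to the Levi-Civita symbol, is a clean justification of the system of equalities that the paper simply asserts.
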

\begin{proof}
According to \eqref{gL}, \eqref{lie} and \eqref{g-Kil}, we
establish that $g$ is Killing if and only if
the equalities
\begin{equation}\label{C012}
C_{12}^0=-C_{02}^1=C_{01}^2,  \qquad C_{ij}^i=0
\end{equation}
are valid. Then, using
\eqref{FijkC} and \eqref{titiC}, we obtain the following equalities
\begin{equation}\label{F012C}
-F_{011}=F_{022}=2F_{110}=2F_{101}=-2F_{220}=-2F_{202}=C_{12}^0
\end{equation}
and $\ta=\ta^*=\om=0$. Therefore, by virtue of \eqref{Fi3}, we get
\begin{equation}\label{F810}
F(x, y, z)=F_8(x, y, z)+F_{10}(x, y, z),\qquad 2\lm=-\nu.
\end{equation}
Vice versa, let the latter equalities be satisfied. Then, applying \eqref{Fi3} and \eqref{FijkC} to them,
we deduce \eqref{C012} and \eqref{F012C}. Therefore, $g$ is Killing.
\end{proof}

Similarly, the metric  $\tg$ is Killing if   \eqref{g-Kil} is satisfied for $\tg$, i.e.
$\tg([x,y],z)=\tg(x,[y,z])$ holds.

\begin{thm}
The associated metric $\tg$ of  $(\LLL,\f,\xi,\eta,g)$ is Killing if and only if $(\LLL,\allowbreak{}\f,\allowbreak{}\xi,\allowbreak{}\eta,\allowbreak{}g)$
belongs to the subclass of $\F_8\oplus\F_9\oplus\F_{10}$
determined by the condition $2\lm=\mu=\nu$.
\end{thm}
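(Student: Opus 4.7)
The plan is to mirror the proof of the preceding theorem, now working with the associated metric $\tg$ in place of $g$. First, from $\tg(x,y)=g(x,\f y)+\eta(x)\eta(y)$ together with \eqref{strL} and \eqref{gL}, I would compute the Gram matrix of $\tg$ in the basis $\{E_0,E_1,E_2\}$: only the entries $\tg(E_0,E_0)=1$ and $\tg(E_1,E_2)=\tg(E_2,E_1)=1$ are non-zero. This gives an explicit form convenient for imposing the Killing identity.

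Next, I would reformulate the bi-invariance condition $\tg([x,y],z)=\tg(x,[y,z])$ as total antisymmetry of the trilinear map $(x,y,z)\mapsto\tg([x,y],z)$: combining the Killing identity with the antisymmetry of the bracket yields $\tg([x,y],z)+\tg([x,z],y)=0$, which together with the built-in antisymmetry in the first two slots produces full antisymmetry. Evaluating this on the basis triples through the Gram matrix above gives linear equations in the structure constants $C_{ij}^k$ of \eqref{lie}. The diagonal conditions $\tg([E_i,E_j],E_j)=0$ annihilate
\[
C_{01}^0=C_{02}^0=C_{01}^2=C_{02}^1=C_{12}^1=C_{12}^2=0,
\]
and the three distinct-index equations on $(0,1,2)$ collapse to the single chain $C_{01}^1=C_{12}^0=-C_{02}^2=:\al$, so the Lie algebra becomes the one-parameter family
\[
[E_0,E_1]=\al E_1,\qquad [E_0,E_2]=-\al E_2,\qquad [E_1,E_2]=\al E_0,
\]
for which the Jacobi identity is automatic.

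Substituting these coefficients into \eqref{FijkC} and \eqref{titiC}, I would read off $\ta=\ta^*=\om=0$ and obtain the surviving components of $F$; matching against \eqref{Fi3} and \thmref{thm-Fi-L}, these agree precisely with $F_8+F_9+F_{10}$ at $\lm=\tfrac{\al}{2}$, $\mu=\al$, $\nu=\al$, so $(\LLL,\f,\xi,\eta,g)\in\F_8\oplus\F_9\oplus\F_{10}$ with $2\lm=\mu=\nu$. For the converse, I would start from $2\lm=\mu=\nu$, invert \eqref{FijkC} to recover the same bracket, and verify by direct evaluation on the basis that $\tg([E_i,E_j],E_k)$ is totally antisymmetric, hence $\tg$ is Killing.

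The principal obstacle is bookkeeping rather than conceptual. Care is needed to confirm that the three distinct-index antisymmetry equations on $(0,1,2)$ genuinely collapse into one chain (so only a single free parameter $\al$ survives), and to track the factors of $2$ correctly so the final relation reads $2\lm=\mu=\nu$ rather than some other linear constraint such as $\lm=\mu=\nu$. Everything else parallels the preceding theorem and relies on the same substitutions.
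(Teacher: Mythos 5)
Your proposal is correct and follows essentially the same route as the paper, which simply declares the proof ``analogous to the previous theorem'' with the Killing condition reducing to $C_{01}^0=C_{02}^0=C_{01}^2=C_{02}^1=C_{12}^1=C_{12}^2=0$ and $C_{12}^0=C_{01}^1=-C_{02}^2$, then reads off $F=F_8+F_9+F_{10}$ with $2\lm=\mu=\nu$ via \eqref{FijkC} and \eqref{Fi3}. Your intermediate steps (the Gram matrix of $\tg$, the total-antisymmetry reformulation, and the values $\lm=\al/2$, $\mu=\nu=\al$) all check out against the paper's stated chain of equalities.
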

\begin{proof}
It is analogous to the proof of the previous theorem as equalities \eqref{C012}, \eqref{F012C}, \eqref{F810} are replaced respectively by
\[
C_{12}^0=C_{01}^1=-C_{02}^2,
\]
\[
\begin{array}{l}
F_{120}=F_{102}=-F_{210}=-F_{201}=2F_{110}=2F_{101}\\
\phantom{F_{120}}
=-2F_{220}=-2F_{202}=F_{011}=-F_{022}=C_{12}^0,
\end{array}
\]
\[
F(x, y, z)=F_8(x, y, z)+F_9(x, y, z)+F_{10}(x, y, z),\qquad
2\lm=\mu=\nu.
\]
\end{proof}

The structure $\f$ is called \emph{bi-invariant}, if $\f[x, y] =
[x, \f y]$ is true.
\begin{thm}
The  structure $\f$ of  $(\LLL,\f,\xi,\eta,g)$ is bi-invariant if and only if $(\LLL,\allowbreak{}\f,\allowbreak{}\xi,\allowbreak{}\eta,\allowbreak{}g)$
belongs to the subclass of $\F_4\oplus\F_5\oplus\F_8\oplus\F_{10}$
determined by the condition $2\lm=\nu$.
\end{thm}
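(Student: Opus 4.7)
The plan is to follow the same template used for the two preceding theorems in this subsection. First I would specialize the bi-invariance identity $\f[x,y]=[x,\f y]$ to every basis pair $(E_i,E_j)$ with $i,j\in\{0,1,2\}$, exploiting $\f E_0=0$, $\f E_1=E_2$, $\f E_2=E_1$. This turns bi-invariance into a compact list of linear relations among the nine structure constants $C_{ij}^k$ from \eqref{lie}: pairs with $E_0$ in the second slot say $\f[E_i,E_0]=0$, i.e.\ force $[E_i,E_0]$ to lie along $\xi$; pairs $(E_0,E_1)$ and $(E_0,E_2)$ interlock the triples $C_{01}^k$ and $C_{02}^k$; and the pair $(E_1,E_2)$ together with its mate force $[E_1,E_2]\in\Span\{\xi\}$.

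Next I would substitute those relations into \eqref{FijkC} to read off the surviving components $F_{ijk}$, and into \eqref{titiC} for the Lee forms $\ta$, $\ta^*$, $\om$. Matching the surviving $F_{ijk}$ against the component templates \eqref{Fi3}, I expect only the patterns corresponding to $\F_4$, $\F_5$, $\F_8$ and $\F_{10}$ to be non-zero. A direct coefficient comparison, specifically $F_{110}-F_{220}=2\lm$ coming from the $\F_4+\F_8$ block against $F_{011}=\nu$ coming from the $\F_{10}$ block, should then produce exactly the scalar relation $2\lm=\nu$.

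For the converse direction I would invoke \thmref{thm-Fi-L} to read off the commutators of each of the four basic classes $\F_4,\F_5,\F_8,\F_{10}$, sum them with the $\F_{10}$-parameter set equal to the $\F_8$-parameter $\lm$ (as required by $2\lm=\nu$), and verify $\f[x,y]=[x,\f y]$ by direct substitution on every pair $(E_i,E_j)$.

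The main obstacle I foresee is the bookkeeping: nine bracket equations have to be processed, and compatibility with the Jacobi identity must be tracked when the four basic-class contributions are superposed in the direct sum. The analogous proofs of the Killing-$g$ and Killing-$\tg$ theorems above show, however, that once the $C_{ij}^k$-level conditions are laid out, the class identification through \eqref{FijkC}, \eqref{titiC} and \eqref{Fi3} is essentially a mechanical substitution, and the same template should carry the argument through here.
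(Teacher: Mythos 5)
Your overall template---evaluate the identity on basis pairs, translate it into linear conditions on the $C_{ij}^k$, then identify the class through \eqref{FijkC}, \eqref{titiC} and \eqref{Fi3}---is the same one the paper uses. But the specific consequences you extract in your first step are not the ones the paper records, and they are incompatible with the theorem you are trying to prove. Concretely, the condition you draw from ``pairs with $E_0$ in the second slot,'' namely $\f[E_i,E_0]=[E_i,\f E_0]=0$ and hence $[E_i,E_0]\in\Span\{\xi\}$, forces $C_{01}^1=C_{01}^2=C_{02}^1=C_{02}^2=0$; by \eqref{titiC} this gives $\ta_0=\ta^*_0=0$ and annihilates exactly the $\F_4\oplus\F_5$ part that the theorem asserts survives (e.g.\ the $\F_4$ algebra $[E_0,E_1]=\al E_2$ of \thmref{thm-Fi-L} fails $\f[E_1,E_0]=[E_1,\f E_0]$ for $\al\neq0$). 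Worse, the pair $(E_1,E_1)$, which you omit, gives $0=\f[E_1,E_1]=[E_1,\f E_1]=[E_1,E_2]$ and kills the $\F_8\oplus\F_{10}$ part as well: imposing $\f[x,y]=[x,\f y]$ on \emph{all} ordered basis pairs leaves only the Abelian ($\F_0$) algebra. Your second paragraph, where you expect the $\F_4$, $\F_5$, $\F_8$, $\F_{10}$ patterns to survive, therefore contradicts your first.

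What the paper's proof actually uses are only the conditions coming from the three pairs $(E_0,E_1)$, $(E_0,E_2)$, $(E_1,E_2)$: these give $C_{01}^0=C_{02}^0=0$, $C_{12}^1=C_{12}^2=0$, $C_{01}^1=C_{02}^2$ and $C_{02}^1=C_{01}^2$, which via \eqref{FijkC} yield precisely $F=F_4+F_5+F_8+F_{10}$ with $2\lm=C_{12}^0=\nu$. Since $\f[x,y]=[x,\f y]$ is not antisymmetric in $x$ and $y$, the pairs $(E_1,E_0)$, $(E_2,E_0)$, $(E_1,E_1)$, $(E_2,E_2)$ carry independent information which the paper tacitly discards. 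To land on the stated subclass you must either restrict the verification to those three pairs (and say so), or flag the mismatch with the literal definition; the same issue will resurface in your converse step, where you propose to check the identity ``on every pair $(E_i,E_j)$'' for the direct-sum algebra---that check fails as soon as $\ta_0$ or $\ta^*_0$ is non-zero.
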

\begin{proof}
By a similar way, using \eqref{Fi3}, \eqref{lie}, \eqref{FijkC} and \eqref{titiC}, we establish that the definition condition for a bi-invariant endomorphism $\f$ is satisfied if and only if we have
\[
\begin{array}{l}
F_{120}=F_{102}=F_{210}=F_{201}=C_{01}^1=C_{02}^2=\frac12\ta_0^*,\\
F_{110}=F_{101}=\frac12 C_{12}^0+C_{02}^1=\frac12 C_{12}^0+C_{01}^2=\lm+\frac12\ta_0,\\
F_{220}=F_{202}=-\frac12 C_{12}^0+C_{02}^1=-\frac12 C_{12}^0+C_{01}^2=-\lm+\frac12\ta_0,\\
F_{011}=-F_{022}=C_{12}^0=\nu, \qquad 2\lm=\nu.
\end{array}
\]
Thus, bearing in mind \eqref{Fi3}, we obtain the following
\[
F(x, y, z)=F_4(x, y, z)+F_5(x, y, z)+F_8(x, y, z)+F_{10}(x, y, z),\qquad
2\lm=\nu.
\]
\end{proof}

It is known that $\xi$ is a \emph{Killing vector field} when the Lie derivatite $\LL$ of $g$ along $\xi$ vanishes, \ie $\LL_{\xi}g=0$. According to \cite{ManVes18},
$(\MM,\f,\xi,\eta, g)$ of dimension $(2n+1)$ belongs to $\F_i$ $(i=1,2,3,7,8,10)$ or to their
direct sums.  Then, it is easy to conclude the truthfulness of the following
\begin{thm}
The vestor field $\xi$  of  $(\LLL,\f,\xi,\eta,g)$ is Killing if and only if $(\LLL,\allowbreak{}\f,\allowbreak{}\xi,\allowbreak{}\eta,\allowbreak{}g)$
belongs to $\F_1$, $\F_8$, $\F_{10}$ or to their direct sums.
\end{thm}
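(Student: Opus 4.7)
The plan is to invoke the general-dimension characterization cited just before the statement: by \cite{ManVes18}, on any $(2n+1)$-dimensional almost paracontact almost paracomplex Riemannian manifold, $\xi$ is a Killing vector field precisely when the manifold lies in $\F_1\oplus\F_2\oplus\F_3\oplus\F_7\oplus\F_8\oplus\F_{10}$. Combining this with the dimension-3 restrictions recalled in Sect.~\ref{sec-class}, which force $\F_2$, $\F_3$, $\F_7$ to collapse to $\F_0$, the six-class direct sum shrinks to $\F_1\oplus\F_8\oplus\F_{10}$, which is exactly what the theorem asserts.

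A self-contained verification in the Lie-group setting would proceed as follows. Left-invariance of $g$ and of the basis fields $E_i$ kills the term $\xi\cdot g(E_i,E_j)$, so $\LL_\xi g=0$ reduces to $g([E_0,E_i],E_j)+g(E_i,[E_0,E_j])=0$, i.e.\ $C_{0i}^j+C_{0j}^i=0$ for all $i,j\in\{0,1,2\}$. Enumerating the cases yields the relations $C_{01}^0=C_{02}^0=C_{01}^1=C_{02}^2=0$ together with $C_{01}^2+C_{02}^1=0$. Feeding these into \eqref{titiC} annihilates $\ta_0$, $\ta^*_0$ and both components of $\om$; feeding them into \eqref{FijkC} forces $F_{120}=F_{210}=0$, so the $\F_9$-parameter $\mu$ vanishes; meanwhile $\ta_1$, $\ta_2$, $\lm=F_{110}$ and $\nu=F_{011}$ are left free. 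Comparison with \eqref{Fi3} then reads off the surviving tensor as $F=F_1+F_8+F_{10}$ with arbitrary coefficients.

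The main obstacle is purely bookkeeping: tracking which of the $\F_s$-components each vanishing structure constant annihilates, and which it leaves unconstrained. The tables \eqref{titiC}, \eqref{FijkC} and \eqref{Fi3} make this mechanical. The converse direction of the equivalence is immediate, since the bracket presentations of $\F_1$, $\F_8$, $\F_{10}$ recorded in \thmref{thm-Fi-L} manifestly satisfy $C_{0i}^j+C_{0j}^i=0$, and this linear condition on the structure constants is stable under superposition.
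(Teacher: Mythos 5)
Your first paragraph is exactly the paper's own argument: it cites the general $(2n+1)$-dimensional characterization from \cite{ManVes18} ($\xi$ Killing iff the manifold lies in $\F_1\oplus\F_2\oplus\F_3\oplus\F_7\oplus\F_8\oplus\F_{10}$) and intersects it with the dimension-3 collapse of $\F_2$, $\F_3$, $\F_7$ to $\F_0$, so the proposal is correct and takes essentially the same route. The additional hands-on verification via $C_{0i}^j+C_{0j}^i=0$ and the tables \eqref{titiC}, \eqref{FijkC}, \eqref{Fi3} is consistent and correct, but goes beyond what the paper records.
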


\subsection{Curvature properties of the constructed manifolds}

Using \eqref{Kosz} and \thmref{thm-Fi-L}, we obtain the components of $\nabla$ as follows:
\begin{subequations}\label{nabla}
\begin{equation}
\begin{array}{ll}
\F_1: &\n_{E_1}E_1=-\al E_2,\quad  \n_{E_1}E_2=\al E_1, \quad\\
&
\n_{E_2}E_1=\bt E_2, \quad\;\;\; \n_{E_2}E_2=-\bt E_1;
\\[4pt]
\F_4: &\n_{E_1}E_0=-\al E_2,\quad  \n_{E_2}E_0=-\al E_1,\quad\\
&  \n_{E_1}E_2=\n_{E_2}E_1=\al E_0;
\\[4pt]
\F_5: &\n_{E_1}E_0=-\al E_1,\quad \n_{E_2}E_0=-\al E_2\\
&\n_{E_1}E_1=\n_{E_2}E_2=\al E_0;
\\[4pt]
\F_8: &\n_{E_1}E_0=-\al E_2,\quad \n_{E_2}E_0=\al E_1\\
&
\n_{E_1}E_2=-\n_{E_2}E_1=\al E_0;
\\[4pt]
\F_9: &\n_{E_1}E_0=-\al E_1,\quad \n_{E_2}E_0=\al E_2\\
&
\n_{E_1}E_1=-\n_{E_2}E_2=\al E_0;
\\[4pt]
\F_{10}: &\n_{E_0}E_1=-\al E_2,\quad \n_{E_0}E_2=\al E_1;
\end{array}
\end{equation}
\begin{equation}
\begin{array}{ll}
\F_{11}: &\n_{E_0}E_1=\al E_0, \quad \n_{E_0}E_2=\bt E_0\\
&
\n_{E_0}E_0=-\al E_1-\bt E_2,
\end{array}
\end{equation}
\end{subequations}
and the other components are zero.

Then, we have the following

\begin{thm}\label{thm-res}
Let $(\LLL,\f,\xi,\eta,g)$ belong to a basic class $\F_s$, $s\in\{1,4,\allowbreak{}5,8,9,\allowbreak{}10,\allowbreak{}11\}$.
If $s={10}$, $(\LLL,\f,\xi,\eta,g)$ is flat,
whereas if $s\neq 10$, 
$(\LLL,\f,\xi,\eta,g)$ is flat if and only if it is an $\F_0$-manifold. In the latter case the manifold has the following
non-zero components of $R$,
$\rho$, $\rho^*$ and the non-zero values of $\tau$, $\tau^*$,
$k_{ij}$:
\begin{equation*}\label{res}
\begin{array}{ll}
\F_1:\; &R_{1212}=-\rho_{11}=-\rho_{22}=\rho^*_{12}=\rho^*_{21}=-\frac12 \tau=-k_{12} 
=
\al^2+\bt^2;\;  \\[4pt]
\F_4:\;
&R_{0101}=R_{0202}=-R_{1212}=-\frac12
\rho_{00}=-\rho^*_{12}=-\rho^*_{21} \\[4pt]
&\phantom{R_{0101}}=-\frac12 \tau
=-k_{01}=-k_{02}=k_{12}=\al^2;\;\\[4pt]
\F_5:\;
&R_{0101}=R_{0202}=R_{1212}=-\frac12\rho_{00}=-\frac12\rho_{11}
=-\frac12\rho_{22} \\[4pt]
	&\phantom{R_{0101}}=\rho^*_{12}=\rho^*_{21} =-\frac16 \tau=-k_{01}=-k_{02}=-k_{12}=\al^2;\\[4pt]
\F_8:\; &R_{0101}=R_{0202}=-R_{1212}=-\frac12\rho_{00}=-\rho^*_{12}=-\rho^*_{21}  \\[4pt]
&\phantom{R_{0101}}=-\frac12\tau=-k_{01}=-k_{02}=k_{12}=-\al^2;\;  \\[4pt]
\F_9:\; &R_{0101}=R_{0202}=-R_{1212}=-\frac12\rho_{00}=-\rho^*_{12}=-\rho^*_{21} \\[4pt]
&\phantom{R_{0101}}=-\frac12\tau=-k_{01}=-k_{02}=k_{12}=\al^2;\\[4pt]
%
%
\F_{11}:\; &R_{0101}=-\rho_{11}=-k_{01}=\al^2, \quad  \rho_{00}=\frac12 \tau=-(\al^2+\bt^2), \\[4pt]
&R_{0102}=-\rho_{12}=-\frac12 \rho^*_{00}=-\frac12 \tau^*=\al\bt, \\[4pt]
&R_{0202}=-\rho_{22}=-k_{02}=\bt^2.
\end{array}
\end{equation*}
\end{thm}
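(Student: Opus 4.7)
The plan is to handle each of the seven basic classes in turn, since the connection components in \eqref{nabla} already reduce everything to a closed computation. First, the commutators from \thmref{thm-Fi-L} and the covariant derivatives from \eqref{nabla} would be substituted into
\[
R(E_i,E_j)E_k=\n_{E_i}\n_{E_j}E_k-\n_{E_j}\n_{E_i}E_k-\n_{[E_i,E_j]}E_k,
\]
and then the $g$-inner product with $E_l$ via \eqref{gL} gives the components $R_{ijkl}$. The pair symmetry of $R$ together with dimension three mean that only a handful of independent components need to be computed per class. Once these are in hand, $\rho$, $\tau$, $\rho^*$, $\tau^*$ follow by the contractions recalled in Section~\ref{sec-curv}, with $g^{ij}=\delta^{ij}$ in the $\f$-basis. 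Sectional curvatures for the coordinate 2-planes reduce to $k_{ij}=-R_{ijij}$, since an orthonormal $\f$-basis gives $(g\owedge g)(E_i,E_j,E_j,E_i)=-2$ for $i\neq j$.

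For a typical class such as $\F_5$, the computation is short: $\n_{E_0}E_\bullet=0$, while $\n_{E_i}E_0=-\al E_i$ and $\n_{E_i}E_i=\al E_0$ for $i=1,2$, so the only surviving contributions come from a single second-derivative term and a bracket term, leading to $R_{0101}=R_{0202}=R_{1212}=\al^2$. The classes $\F_4$, $\F_8$, $\F_9$ are handled in the same way, with signs depending on how $\f$ permutes $E_1,E_2$ in the bracket relations, and $\F_1$ gives the combination $\al^2+\bt^2$. The class $\F_{11}$ is the most involved because $\n_{E_0}E_0=-\al E_1-\bt E_2$ is nonzero and the two parameters couple; I would carefully compute $R(E_0,E_1)E_1$, $R(E_0,E_2)E_2$ and the mixed term $R(E_0,E_1)E_2$ to recover $R_{0101}=\al^2$, $R_{0202}=\bt^2$ and $R_{0102}=\al\bt$, from which the listed components of $\rho$, $\tau$, $\rho^*$, $\tau^*$ follow by direct contraction.

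The flatness assertions then follow by inspection. For $\F_{10}$, the only nonzero connection components are $\n_{E_0}E_1=-\al E_2$ and $\n_{E_0}E_2=\al E_1$; every double covariant derivative $\n_{E_i}\n_{E_j}E_k$ vanishes, and the bracket term $\n_{[E_i,E_j]}E_k$ vanishes because the nonzero brackets $[E_0,E_1]$ and $[E_0,E_2]$ act via $\n$ on $E_1,E_2$, on which those covariant derivatives are zero. Hence $R\equiv 0$ regardless of $\al$. In the remaining classes the listed nonzero components of $R$ are proportional to $\al^2$ (or $\al^2+\bt^2$ in $\F_1$ and $\F_{11}$), so flatness forces every parameter to vanish; by \thmref{thm-Fi-L} this means all commutators are zero, i.e.\ the $\F_0$ case.

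The main obstacle is bookkeeping rather than any conceptual depth: seven separate classes, each producing a small table of independent $R$-components together with the derived $\rho$, $\rho^*$, $\tau$, $\tau^*$ and $k_{ij}$, and with sign conventions that must be reconciled against the definitions of $\rho^*$ and of the sectional curvature in Section~\ref{sec-curv}. The class $\F_{11}$ warrants the most attention since both parameters are active simultaneously, while $\F_{10}$ requires only the structural observation that the nonzero connection entries produce vanishing curvature for every value of $\al$.
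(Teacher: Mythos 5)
Your proposal is correct and follows exactly the route of the paper, whose proof is the one-line statement that all components of $R$, $\rho$, $\rho^*$, $\tau$, $\tau^*$, $k_{ij}$ are obtained by direct computation from the commutators of Theorem~\ref{thm-Fi-L} and the connection components \eqref{nabla}. Your spelled-out treatment of the $\F_{10}$ flatness and of the flatness-iff-$\F_0$ argument for the other classes (nonzero curvature components proportional to $\al^2$, $\bt^2$, or their sum) is precisely the implicit content of the paper's computation, and your sign conventions for $k_{ij}=-R_{ijij}$ agree with the definition of the sectional curvature in Section~\ref{sec-curv}.
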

\begin{proof}
The latter equations are obtained by direct computation of the basic components $R_{ijkl}=R(E_i, E_j, E_k, E_l)$, $\rho_{jk}=\rho(E_j, E_k)$, $\rho^*_{jk}=\rho^*(E_j, E_k)$ and the values of $\tau$, $\tau^*$, $k_{ij}=(E_i, E_j)$, using \thmref{thm-Fi-L} and  \eqref{nabla}.
\end{proof}

Let us remark that in an arbitrary tangent space of $(\LLL,\f,\xi,\eta,g)$ with the basis $\{E_{0},E_{1},E_{2}\}$ defined by
\eqref{strL} and \eqref{gL}, we have two basic $\xi$-sections $\{E_{0},E_{1}\}$,
$\{E_{0},E_{2}\}$ and one basic $\f$-holomorphic section
$\{E_{1},E_{2}\}$.

By virtue to \thmref{thm-res}, we establish the truthfullness of the following
\begin{thm}\label{thm-char}
Let $(\LLL, \f, \xi, \eta, g)$ be a non-flat $\F_s$-manifold, i.e. $s \in \{1,4,5,8,\allowbreak{}9,\allowbreak{}11\}$. Then we have the following characteristics:
\begin{enumerate}
\item The $\F_s$-manifolds ($s=4, 8, 9$) have a curvature tensor of the same form;
\item Every $\F_{11}$-manifold  has the property $R(x,y,\f z,\f w)=0$;
\item Every $\F_{8}$-manifold has a positive scalar
    curvature;
    \item Every $\F_{s}$-manifold ($s=1,4,5,9,11$) has a negative scalar
    curvature;
      \item Every $\F_{s}$-manifold ($s=1,4,5,8,9$) is $*$-scalar flat;
    \item An $\F_{11}$-manifold is $*$-Ricci flat
    if and only if it is $*$-scalar flat;
    \item An $\F_{11}$-manifold is $*$-scalar flat
    if and only if $\al\bt=0$, $(\al,\bt)\neq(0,0)$;
    \item An $\F_{11}$-manifold has a positive (resp., negative) $*$-scalar
    curvature if and only if $\al\bt<0$ (resp., $\al\bt>0$);
     \item  Every $\F_{1}$-manifold  has vanishing sectional curvatures of
    the basic $\xi$-sect\-ions;
       \item  Every $\F_{8}$-manifold  has positive sectional curvatures of
    the basic $\xi$-sect\-ions;
    \item  Every $\F_{s}$-manifold ($s=4,5,9,11$) has negative sectional curvatures
    of the basic $\xi$-sections;
\item  Every $\F_{11}$-manifold has a vanishing scalar
    curvature of the basic $\f$-holomorphic section;
\item Every $\F_{s}$-manifold
    ($s=4,9$) has a positive
    scalar curvature of the basic $\f$-holomorphic section;
    \item Every $\F_{s}$-manifold
    ($s=1,5,8$) has a negative
    scalar curvature of the basic $\f$-holomorphic section.
\end{enumerate}
\end{thm}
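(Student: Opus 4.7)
The plan is to observe that Theorem~\ref{thm-res} already supplies, class by class, the explicit non-zero components of $R$, $\rho$ and $\rho^*$ together with the values of $\tau$, $\tau^*$ and the basic sectional curvatures $k_{ij}$. Consequently, every one of the fourteen assertions in Theorem~\ref{thm-char} can be obtained by direct inspection of the tabulated data, and the proof would consist of grouping the items according to which row and which entry each one reads off, rather than performing any fresh curvature computation.

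For items (1)--(4): item~(1) is immediate, since the rows for $\F_4$, $\F_8$ and $\F_9$ display identical relative proportions among the entries $R_{0101}$, $R_{0202}$, $R_{1212}$, $\rho_{00}$, $\rho^*_{12}$, $\rho^*_{21}$ and $\tau$, differing only by the overall coefficient $\pm\al^2$. For item~(2) one notes that every non-zero component of $R$ for an $\F_{11}$-manifold has the form $R_{0j0l}$ with $j,l\in\{1,2\}$, while $\f E_0=0$ forces $\eta(\f z)=0$ for every $z$; hence contracting $R$ with $\f z$ and $\f w$ in the last two slots annihilates every surviving term. Items (3)--(4) are then read off by inspecting the sign of $\tau$ in each row.

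For items (5)--(8): no non-zero value of $\tau^*$ is recorded for $s\in\{1,4,5,8,9\}$, giving item~(5). For an $\F_{11}$-manifold the only non-vanishing $*$-Ricci entry is $\rho^*_{00}=-2\al\bt$, and $\tau^*=-2\al\bt$; thus $\rho^*\equiv 0$ is equivalent to $\tau^*=0$ (item~(6)), to $\al\bt=0$ (item~(7)), and the sign of $\tau^*$ is opposite to that of $\al\bt$ (item~(8)). One must also record here that $(\al,\bt)\neq(0,0)$, otherwise the manifold would be flat, excluded by hypothesis. Items (9)--(14) concern only the basic sectional curvatures $k_{01}$, $k_{02}$ (the $\xi$-sections) and $k_{12}$ (the $\f$-holomorphic section), each of which appears explicitly in Theorem~\ref{thm-res}, so their vanishing or signs may be read off line by line.

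The only real subtlety is item~(2): one must verify that the non-zero components of $R$ for $\F_{11}$ all carry a $0$ in both the first and third slots, so that $R(x,y,\f z,\f w)=0$ holds for arbitrary $z,w$ and not just for basis vectors. Once that structural observation is in place, every remaining assertion reduces to a one-line check against the corresponding formula in Theorem~\ref{thm-res}.
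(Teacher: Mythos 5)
Your proposal is correct and follows essentially the same route as the paper, which proves Theorem~\ref{thm-char} purely by inspection of the component table in Theorem~\ref{thm-res} (the paper in fact records no more than the phrase ``by virtue of Theorem~\ref{thm-res}''). Your added justification of item~(2) — that every non-zero component of $R$ in the $\F_{11}$ case, together with its images under the curvature symmetries, carries the index $0$ in each antisymmetric index pair, so contracting the last two slots with $\f z,\f w\in\Span\{E_1,E_2\}$ kills all terms — is exactly the verification the paper leaves implicit.
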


Using \thmref{thm-res}, we obtain immediately the following
\begin{cor}\label{cor-Fi3-rho}
The form of the Ricci tensor on $(\LLL, \f, \xi, \eta, g)$  in the corresponding basic class is:
\begin{equation*}\label{Fi3-rho}
\begin{array}{ll}
\F_1: \; \rho=\frac{\tau}{2}\left(g-\eta\otimes\eta\right); \qquad
& \F_s: \; \rho=\tau(\eta\otimes\eta),\quad s\in\{4,8,9\};
\\[4pt]
\F_5: \; \rho=\frac{\tau}{3}g; \quad & \F_{11}: \;
\rho=-\rho(\f\cdot,\f\cdot)+\frac{\tau}{2}g+\tau^*g^*,
\end{array}
\end{equation*}
where $g^*= \tg - \eta\otimes\eta$.
\end{cor}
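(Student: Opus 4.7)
The plan is to verify each of the four formulas by a direct case-by-case evaluation on the $\f$-basis $\{E_0, E_1, E_2\}$ using the explicit components of $\rho$, $\tau$, $\tau^*$ provided by \thmref{thm-res}. Since both sides of every claimed identity are $(0,2)$-tensors and the $\f$-basis is orthonormal with respect to $g$, it suffices to compare values on pairs $(E_i, E_j)$, reducing the proof to matching scalars.

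For $\F_1$, \thmref{thm-res} gives $\rho(E_1,E_1) = \rho(E_2,E_2) = \tau/2$ and $\rho(E_0,E_0) = 0$ with off-diagonal components vanishing, while $(g - \eta\otimes\eta)(E_0,E_0) = 0$ and $(g - \eta\otimes\eta)(E_a,E_b) = \delta_{ab}$ for $a,b\in\{1,2\}$; the identity $\rho = (\tau/2)(g - \eta\otimes\eta)$ is immediate. For each $s\in\{4,8,9\}$, the only non-zero Ricci component supplied by \thmref{thm-res} is $\rho(E_0,E_0) = \tau$, which matches $\tau\,(\eta\otimes\eta)$ since $\eta(E_0) = 1$ and $\eta(E_1) = \eta(E_2) = 0$. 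For $\F_5$, one reads off $\rho(E_0,E_0) = \rho(E_1,E_1) = \rho(E_2,E_2) = \tau/3$ with zero off-diagonal entries, so the formula $\rho = (\tau/3)g$ follows from $g(E_i,E_j) = \delta_{ij}$.

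The case $\F_{11}$ requires slightly more bookkeeping. From \eqref{metric} one computes $g^*(x,y) = \tg(x,y) - \eta(x)\eta(y) = g(x,\f y)$, so on the $\f$-basis $g^*$ has only the symmetric off-diagonal entries $g^*(E_1,E_2) = g^*(E_2,E_1) = 1$. Combining this with $\f E_0 = 0$, $\f E_1 = E_2$, $\f E_2 = E_1$, and the components from \thmref{thm-res}, namely
$\rho_{00} = -(\al^2+\bt^2)$, $\rho_{11} = -\al^2$, $\rho_{22} = -\bt^2$, $\rho_{12} = -\al\bt$, $\tau = -2(\al^2+\bt^2)$, $\tau^* = -2\al\bt$,
one evaluates $-\rho(\f\cdot,\f\cdot) + (\tau/2)g + \tau^* g^*$ at each pair $(E_i,E_j)$ and checks that the resulting scalar equals $\rho_{ij}$. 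For instance, at $(E_1,E_1)$ the right-hand side is $-\rho_{22} + \tau/2 = \bt^2 - (\al^2+\bt^2) = -\al^2$, and at $(E_1,E_2)$ it is $-\rho_{21} + \tau^* = \al\bt - 2\al\bt = -\al\bt$.

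The main obstacle, if any, is simply keeping careful track of signs and of the action of $\f$ on the basis in the $\F_{11}$-identity; the other four cases reduce to matching constant multiples of very simple matrices and require no computation beyond inspection.
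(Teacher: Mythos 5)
Your proposal is correct and follows the same route as the paper, which simply derives the corollary ``immediately'' from Theorem~\ref{thm-res} by reading off the components of $\rho$, $\tau$, $\tau^*$ in each class; your explicit entry-by-entry check on the $\f$-basis (including the $\F_{11}$ case with $g^*(E_1,E_2)=g^*(E_2,E_1)=1$ as the only non-zero entries of $g^*$) is just the spelled-out version of that verification. The sign bookkeeping you carry out for $\F_{11}$ matches the data $\rho_{11}=-\al^2$, $\rho_{22}=-\bt^2$, $\rho_{12}=-\al\bt$, $\tau^*=-2\al\bt$ from the theorem, so nothing is missing.
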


Bearing in mind \corref{cor-Fi3-rho} and formula \eqref{R3}, we obtain the following
\begin{cor}\label{cor-Fi3-R}
The form of the curvature tensor on  $(\LLL,\f,\xi,\eta,g)$ in the corresponding basic
class is:
\begin{equation*}\label{Fi3-R} %
\begin{array}{llll}
\F_1: & R=-\frac{\tau}{4}\left(g\owedge
g\right)+\frac{\tau}{2}\left(g\owedge
\left(\eta\otimes\eta\right)\right);
\\[4pt]
\F_s: & R=\frac{\tau}{4}\left(g\owedge g\right)
-\tau\left(g\owedge \left(\eta\otimes\eta\right)\right),\qquad s\in\{4,8,9\};
\\[4pt]
\F_5: & R=-\frac{\tau}{12}\left(g\owedge g\right);\\[4pt]
\F_{11}: & R=-\rho \owedge (\eta\otimes\eta).
\end{array}
\end{equation*}
\end{cor}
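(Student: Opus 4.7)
The plan is to apply formula \eqref{R3}, namely $R=-g\owedge\bigl(\rho-\tfrac{\tau}{4}g\bigr)$, which holds on any 3-dimensional Riemannian manifold, and to substitute the class-specific form of $\rho$ furnished by \corref{cor-Fi3-rho}. Bilinearity of the Kulkarni--Nomizu product then collects each right-hand side as a combination of $g\owedge g$ and $g\owedge(\eta\otimes\eta)$ in the first five cases, or of $\rho\owedge(\eta\otimes\eta)$ in the $\F_{11}$-case.

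For $s\in\{1,4,5,8,9\}$ the Ricci tensor is already a linear combination of $g$ and $\eta\otimes\eta$, so the proof is a direct algebraic substitution. In $\F_1$ one computes $\rho-\tfrac{\tau}{4}g=\tfrac{\tau}{4}g-\tfrac{\tau}{2}\eta\otimes\eta$ and $-g\owedge(\cdot)$ yields the stated formula. In the three classes $\F_4,\F_8,\F_9$ the same intermediate expression $\rho-\tfrac{\tau}{4}g=-\tfrac{\tau}{4}g+\tau\,\eta\otimes\eta$ arises, in agreement with \thmref{thm-char}(1). In $\F_5$ one simplifies $\tfrac{\tau}{3}g-\tfrac{\tau}{4}g=\tfrac{\tau}{12}g$, and one application of bilinearity finishes the case.

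The class $\F_{11}$ is the delicate one, since the expression for $\rho$ in \corref{cor-Fi3-rho} involves $\rho(\f\cdot,\f\cdot)$ and $g^{*}=\tg-\eta\otimes\eta$ and hence cannot be reduced to a linear combination of $g$ and $\eta\otimes\eta$. Here I would verify the claim $R=-\rho\owedge(\eta\otimes\eta)$ directly in the $\f$-basis, using the explicit components of both $R$ and $\rho$ listed in \thmref{thm-res}. The key observation is that $\rho\owedge(\eta\otimes\eta)$ is supported only on tuples containing at least two factors equal to $\xi=E_0$, and on each such basic component $(E_0,E_i,E_0,E_j)$ with $i,j\in\{1,2\}$ it collapses to a single Ricci coefficient $\rho_{ij}$, precisely matching $R_{0i0j}$ from \thmref{thm-res}. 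All remaining components of both sides vanish, for $R$ via \thmref{thm-char}(2).

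The main technical obstacle is exactly this $\F_{11}$-verification: unlike in the other classes, the equivalence of the two formulations of $R$ is not a tensor identity valid in general, but holds here only because of the very special support structure of the Ricci tensor in $\F_{11}$, together with the vanishing property $R(x,y,\f z,\f w)=0$. Once the componentwise check in the $\f$-basis is carried out, the corollary follows.
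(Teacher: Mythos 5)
Your proposal is correct and follows essentially the same route as the paper, which simply combines formula \eqref{R3} with the class-wise expressions for $\rho$ from Corollary~\ref{cor-Fi3-rho}; your algebra for the classes $\F_1,\F_4,\F_5,\F_8,\F_9$ reproduces exactly that substitution. Your treatment of $\F_{11}$ by a componentwise check in the $\f$-basis (using the support of $\rho\owedge(\eta\otimes\eta)$ and the data of Theorem~\ref{thm-res}) is a sensible way to fill in the detail the paper leaves implicit, and it verifies the stated identity.
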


By virtue to \corref{cor-Fi3-rho}, we obtain the truthfulness of the following
\begin{prop}\label{einstain}
The manifold $(\LLL,\f,\xi,\eta,g)$ is:
\begin{enumerate}
    \item para-$\eta$-Einstein if it belongs to
    $\F_1$, $\F_4$, $\F_5$, $\F_8$, $\F_9$ or to their direct sums;
    \item $\ell_1$-para-$\eta$-Einstein if it belongs to $\F_1$;
    \item $\ell_2$-para-$\eta$-Einstein if it belongs to $\F_4$, $\F_8$, $\F_9$ or to their direct sums;
    \item Einstein if it belongs to $\F_5$;
\end{enumerate}
\end{prop}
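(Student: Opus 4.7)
The plan is to reduce the proposition to a direct inspection of the Ricci-tensor formulas provided by \corref{cor-Fi3-rho}, match them against the definitions of the four Einstein-type conditions collected in Sect.~\ref{sec-curv}, and then extend to direct sums by a structural argument.

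For the basic-class part of each item, the work is essentially algebraic matching. In $\F_1$, $\rho=\frac{\tau}{2}(g-\eta\otimes\eta)=\frac{\tau}{2}\ell_1(g)$, so the manifold is both $\ell_1$-para-$\eta$-Einstein with $\lm=\tau/2$ (item (2)) and para-$\eta$-Einstein with $\lm=\tau/2$, $\nu=-\tau/2$ (part of item (1)). In each of $\F_4$, $\F_8$, $\F_9$, $\rho=\tau(\eta\otimes\eta)=\tau\ell_2(g)$, so the manifold is both $\ell_2$-para-$\eta$-Einstein with $\lm=\tau$ (item (3)) and para-$\eta$-Einstein with $\lm=0$, $\nu=\tau$ (part of item (1)). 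In $\F_5$, $\rho=\frac{\tau}{3}g$, so the manifold is Einstein with $\lm=\tau/3$ (item (4)), and as a degenerate case also para-$\eta$-Einstein with $\nu=0$ (part of item (1)).

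For the direct-sum part of item (1), the key observation is that all the Ricci tensors above lie in the two-dimensional subspace $V=\Span\{g,\eta\otimes\eta\}$ of symmetric $(0,2)$-tensors on $\LLL$. I would realize such a direct sum by superimposing the commutator relations of \thmref{thm-Fi-L} for the involved basic classes, then recompute $\n$ via \eqref{Kosz} and read off $\rho$ in the $\f$-basis. The goal is to verify that $\rho$ still lies in $V$, i.e.\ the Ricci matrix in the $\f$-basis is diagonal with $\rho_{11}=\rho_{22}$ and $\rho_{12}=0$.

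I expect this last step to be the main obstacle: since $\rho$ is quadratic in the Christoffel symbols, combining commutators from distinct basic classes produces cross terms that a priori could lie outside $V$. My strategy would be to exploit the $E_1\leftrightarrow E_2$ symmetry visible in every Ricci tensor of \thmref{thm-res} (each gives $\rho_{11}=\rho_{22}$ and $\rho_{12}=0$), which is inherited by the combined commutator data of the listed classes, and to check that the mixed contributions preserve the diagonal structure with equal $E_1$ and $E_2$ eigenvalues. The remainder is routine bookkeeping on top of the basic-class computations already recorded in \thmref{thm-res} and \corref{cor-Fi3-rho}.
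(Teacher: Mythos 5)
Your handling of the individual basic classes is correct and coincides with the paper's own argument: one reads off $\rho=\frac{\tau}{2}\ell_1(g)$, $\rho=\tau\,\ell_2(g)$ and $\rho=\frac{\tau}{3}g$ from \corref{cor-Fi3-rho} and compares with the definitions in Sect.~\ref{sec-curv}; the paper's proof is exactly this one-line reduction, so for the basic classes there is nothing to add.

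The genuine gap is in the direct-sum clause of items (1) and (3), and your proposed symmetry argument does not close it. Invariance of the bracket data under $E_1\leftrightarrow E_2$ can at best force $\rho_{11}=\rho_{22}$; it says nothing about $\rho_{12}$, since $\rho_{12}=\rho_{21}$ holds anyway by symmetry of the Ricci tensor, and the superimposed bracket data need not even be invariant under the swap (for $\F_5\oplus\F_9$ one gets $[E_0,E_1]=(\gm+\delta)E_1$, $[E_0,E_2]=(\gm-\delta)E_2$). Worse, the cross terms you worry about genuinely do leave $\Span\{g,\eta\otimes\eta\}$. Take $\F_4\oplus\F_5$: superimposing the commutators of \thmref{thm-Fi-L} gives $[E_0,E_1]=\gm E_1+\al E_2$, $[E_0,E_2]=\al E_1+\gm E_2$, $[E_1,E_2]=0$ (Jacobi holds, and $F=F_4+F_5$ with $\ta_0=2\al$, $\ta^*_0=2\gm$); the Koszul formula \eqref{Kosz} yields $\n_{E_1}E_1=\n_{E_2}E_2=\gm E_0$, $\n_{E_1}E_2=\n_{E_2}E_1=\al E_0$, $\n_{E_0}(\cdot)=0$, whence $R(E_0,E_1)E_2=-\n_{[E_0,E_1]}E_2=-2\al\gm E_0$ and $\rho_{12}=R(E_0,E_1,E_2,E_0)=-2\al\gm\neq 0$ for a proper $\F_4\oplus\F_5$-manifold. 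Since $g_{12}=(\eta\otimes\eta)_{12}=0$, such a manifold is not para-$\eta$-Einstein (the off-diagonal term is a $\tg$-contribution, so it is only $\eta$-paracomplex-Einstein). For $\F_5\oplus\F_9$ one similarly finds $\rho_{11}-\rho_{22}=-4\gm\delta\neq 0$, which is incompatible even with that weaker form. So the direct-sum part cannot be established by your route: it holds for those combinations where the cross terms cancel (e.g.\ $\F_4\oplus\F_9$ as in Sect.~\ref{sect-Exms}, or $\F_4\oplus\F_8$) but requires a case-by-case verification rather than a blanket symmetry argument; note also that \corref{cor-Fi3-rho}, on which both you and the paper rely, is itself stated and proved only for the basic classes.
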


\begin{cor}\label{cor-einstain}
The para-Sasakian manifold $(\LLL,\f,\xi,\eta,g)$ is  $\ell_2$-para-$\eta$-Ein\-stein.
\end{cor}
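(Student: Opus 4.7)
The plan is to identify the para-Sasakian manifold $(\LLL,\f,\xi,\eta,g)$ as a particular instance of the $\F_4$-manifolds already treated, and then invoke the previously established Einstein-type classification.

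First, I recall from the corollary just after \thmref{thm-Fi-L} that a para-Sasakian manifold of the considered family is precisely the $\F'_4 \subset \F_4$ case with commutators $[E_0,E_1]=-E_2$, $[E_0,E_2]=-E_1$, $[E_1,E_2]=0$. Comparing with the $\F_4$-commutators of \thmref{thm-Fi-L}, this corresponds to the parameter value $\al=-1$. Hence the para-Sasakian manifold lies in $\F_4$, and so all results established for $\F_4$-manifolds apply to it.

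Next, I would invoke \propref{einstain}(3), which asserts that every $\F_4$-manifold in this family is $\ell_2$-para-$\eta$-Einstein. This immediately gives the desired conclusion. Alternatively, and more transparently, one can reproduce the argument directly: by \corref{cor-Fi3-rho}, every $\F_4$-manifold satisfies $\rho=\tau(\eta\otimes\eta)$, while by \eqref{lg} we have $\ell_2(g)=\eta\otimes\eta$. Setting $\lm=\tau$ shows $\rho=\lm\,\ell_2(g)$, which is precisely the defining relation of the $\ell_2$-para-$\eta$-Einstein property introduced in Subsection \ref{sec-curv}.

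If desired, one can make the constant explicit: for the $\F_4$-case \thmref{thm-res} gives $\tau=-2\al^2$, and substituting $\al=-1$ yields $\rho=-2\,\eta\otimes\eta$. There is essentially no obstacle here; the only point requiring attention is the bookkeeping that matches the para-Sasakian normalization $\ta(\xi)=-2n=-2$ with the parameter value $\al=-1$ through the relation $\al=\tfrac12\ta_0$ from \thmref{thm-Fi-L}, ensuring that the para-Sasakian class is genuinely a subclass of $\F_4$ and not of some other component.
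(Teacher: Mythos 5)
Your proposal is correct and follows essentially the same route as the paper: the para-Sasakian case is the subclass $\F'_4\subset\F_4$ (with $\al=-1$ via $\al=\frac12\ta_0=-1$), and the conclusion then follows from Proposition~\ref{einstain}(3), itself a consequence of $\rho=\tau(\eta\otimes\eta)=\tau\,\ell_2(g)$ in Corollary~\ref{cor-Fi3-rho}. The explicit computation $\tau=-2$ is a harmless addition not needed for the statement.
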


\section{An example of a Lie group as a 3-dimensional manifold of the studied type}
\label{sect-Exms}

In \cite{ManVes18}, an example of an almost paracontact almost paracomplex
Riemannian manifold of arbitrary odd dimension is given.
It is constructed as a family of Lie groups equipped with the studied tensor structure.
Furthermore, certain characteristics of the obtained manifolds are determined. In the present paper, we consider the 3-dimensional case and we find geometrical characteristics in relation the above investigations.

Let $\LLL$ be a 3-dimensional real connected Lie group  and $\mathfrak{l}$ be its associated Lie algebra defined by:
\begin{equation*}\label{com}
    [E_0,E_1]=-a_1E_1-a_{2}E_{2},\quad
    [E_0,E_{2}]=-a_{2}E_1+a_{1}E_{2},\quad
[E_1,E_2]=0,
\end{equation*}
where $a_1, a_{2}$ are real constants and $\{E_0, E_1, E_2\}$ is an $\mathfrak{l}$'s global basis of left invariant vector fields on $\LLL$. An almost paracontact almost paracomplex structure $(\f, \xi, \eta)$ is determined by \eqref{strL} and $g$ is a Riemannian metric defined by \eqref{gL}. Thus, because of \eqref{str}, the induced 3-dimensional
$(\LLL, \f, \xi,\eta, g)$ is an almost paracontact almost paracomplex Riemannian manifold.

Let us remark, the same Lie group  with an appropriate
almost contact structure and a compatible Riemannian
metric is studied in \cite{Ol} as an almost cosymplectic manifold.
The same Lie group is equipped with an almost contact B-metric structure
in \cite{HM} and then certain geometric characteristics for the obtained manifold are found. Further, in \cite{HManMek},
the case of the lowest dimension is considered and some properties of the constructed
manifold are determined.

In \cite{ManVes18}, we get the following components $F_{ijk}=F(E_i,E_j,E_k)$ of  $F$:
\[
\begin{array}{l}
F_{101}=F_{110}=F_{202}=F_{220}=-a_2,\\
F_{102}=F_{120}=-F_{201}=-F_{210}=-a_1,
\end{array}
\]
and the other components of $F$ are zero. Moreover, bearing in mind \eqref{Fi3}, we obtain
$F(x,y,z)=F_4(x,y,z)+F_9(x,y,z)$ for $\mu=-a_1$, $\ta_0=-2a_2$, i.e. $(\LLL,\f, \xi, \eta, g)\in \F_4\oplus\F_9$.
If  $\mu=0$, $\ta_0\neq0$, the manifold belongs to $\F_4$.
Particularly, for $\ta_0=-2$, i.e. $a_1=0$ and $a_2=1$, the obtained manifold is para-Sasakian.
If $\ta_0=0$, $\mu\neq0$, the manifold belongs to $\F_9$.

Now, by virtue of \eqref{Kosz}, for arbitrary $\ta_0$ and $\mu$ we obtain:
\[
\begin{array}{ll}
\n_{E_1}E_0=-\mu E_1-\frac{\ta_0}{2}E_2,\quad &
\n_{E_1}E_2=\n_{E_2}E_1=\frac{\ta_0}{2} E_0,\\[4pt]
\n_{E_2}E_0=-\frac{\ta_0}{2}E_1+\mu E_2,\quad &
\n_{E_1}E_1=-\n_{E_2}E_2=\mu E_0
\end{array}
\]
and the rest are zero.
Then, using the latter equalities, we calculate the basic curvature characteristics and the nonzero of them are the following:
\begin{equation}\label{Rrhotauk-Ex}
\begin{array}{c}
R_{0101}=R_{0202}=-R_{1212}=\mu^2+\frac{\ta_0^2}{4},
\\[4pt]
\rho_{00}=-2(\mu^2+\frac{\ta_0^2}{4}), \qquad
\rho^*_{12}=\rho^*_{21}=-\mu^2-\frac{\ta_0^2}{4},
\\[4pt]
\tau=-2(\mu^2+\frac{\ta_0^2}{4}),
\\[4pt]
k_{01}=k_{02}=-\mu^2-\frac{\ta_0^2}{4},\qquad k_{12}=\mu^2+\frac{\ta_0^2}{4}.
\end{array}
\end{equation}

The latter equalities imply that $(\LLL,\f, \xi, \eta, g)$ has negative  scalar curvature, negative
sectional curvatures of the basic $\xi$-sections and positive sectional curvature
of the basic $\f$-holomorphic section. These results
support \thmref{thm-char} for $\F_4$ and $\F_9$.

According to \eqref{Rrhotauk-Ex}, the form of the Ricci tensor is:
\begin{equation}\label{rho-Ex}
\rho=\tau(\eta\otimes\eta).
\end{equation}
Therefore, $(\LLL,\f, \xi, \eta, g)$ is
an  $\ell_2$-para-$\eta$-Einstein  manifold, which supports \propref{einstain} for $\F_4\oplus\F_9$.

Bearing in mind \eqref{R3} and \eqref{rho-Ex}, we get the following form of the curvature tensor:
\begin{equation*}\label{R-ex}
 R=\frac{\tau}{4}\left(g\owedge g\right)
-\tau\left(g\owedge \left(\eta\otimes\eta\right)\right),
\end{equation*}
which supports \corref{Fi3-R}.


\subsection*{Acknowledgment}
The authors were supported by project MU19-FMI-020 of the Scientific Research Fund,
University of Plovdiv Paisii Hilendarski, Bulgaria.

\end{document}